\newtheorem{theorem}{Theorem}[section]
\newtheorem{lemma}[theorem]{Lemma}
\newtheorem{remark}[theorem]{Remark}
\numberwithin{equation}{section}
\begin{document}

\title{Composition operators and Rational Inner Functions II: Boundedness between two different Bergman spaces.}

\author{Athanasios Beslikas} \footnote{MSC classification: 32A37, 32A40, 30J10. The author was partially supported by the National Science Center, Poland, SHENG III,
research project 2023/48/Q/ST1/00048}

\begin{abstract} In this note we provide a sufficient condition on when the composition operator $C_{\Phi}:A^2_{a}(\mathbb{D}^2)\to A^2_{\beta}(\mathbb{D}^2)$ is bounded, whenever $a\ge-1$ and $\beta$ is positive, with the assumption that $\Phi$ is induced by non-smooth Rational Inner Functions.  
\end{abstract}
\maketitle
\section{Introduction} Consider the weighted Bergman spaces $$A^2_{\beta}(\mathbb{D}^2)=\Biggl\{f\in \mathcal{O}(\mathbb{D}^2,\mathbb{C}):\int_{\mathbb{D}^2}|f(z_1,z_2)|^2(1-|z_1|^2)^{\beta}(1-|z_2|^2)^{\beta}dV(z_1,z_2)<+\infty\Biggr\},$$
where $\beta \ge -1.$ Let $\Phi=(\phi,\psi),$ where $\Phi\in\mathcal{O}(\mathbb{D}^2,\mathbb{D}^2)$ and assume that $\phi,\psi\in \mathcal{O}(\mathbb{D}^2,\mathbb{D})$ are induced by Rational Inner Functions. In this paper we study the action of the composition operator $C_{\Phi}(f)=f\circ\Phi$ induced by such symbols $\Phi$ on the weighted Bergman spaces $A^2_{\beta}(\mathbb{D}^2).$ The motivation lies in the fact that, in general, Rational Inner Functions may not be smooth on $\overline{\mathbb{D}^2},$ a fundamental difference to the studies of Bayart and Kosi\'nski in the papers \cite{Bayart2}, \cite{Bayart3} and \cite{Kosinski}. There, both authors considered symbols $\Phi$ which were $\mathcal{C}^2-$smooth on the closure of the bidisc.\\\\
Let $p\in \mathbb{C}[z_1,z_2]$ with bidegree $\mathrm{deg}(p)=(n,m)\in\mathbb{Z}_{>0}^2$ be a stable polynomial on $\mathbb{D}^2$ which might vanish at $\mathbb{T}^2.$ Then, according to Rudin's Theorem (see \cite{Rudin}) Rational Inner Functions on the bidisc setting are of the form
$\phi(z_1,z_2)=z_1^Nz_2^M\frac{\widetilde{p}(z_1,z_2)}{p(z_1,z_2)},$
where $z=(z_1,z_2)\in\mathbb{D}^2,$ and $\widetilde{p}(z_1,z_2)=z_1^nz_2^m\overline{p\left(\frac{1}{\bar{z_1}},\frac{1}{\bar{z_2}}\right)}.$ In contrast to the one dimensional setting, where the only Rational Inner Functions are the Finite Blaschke Products, in two complex dimensions, Rational Inner Functions may have singularities which occur on the distinguished boundary $\mathbb{T}^2.$ By a result of Knese, we also know that Rational Inner Functions have non-tangential limit everywhere on $\mathbb{T}^2,$ even on the singularities that might occur, and the value of this non-tangential limit is unimodular. A concrete example of a Rational Inner Function on the bidisc is the Knese Function, 
$$\varphi(z)=\frac{2z_1z_2-z_1-z_2}{2-z_1-z_2}$$
for $(z_1,z_2)\in\mathbb{D}^2.$ We observe that this function has a singularity on the point $(1,1)\in \mathbb{T}^2.$
For more about Rational Inner Functions and their properties the reader can consult the works \cite{Bickel1}, \cite{Bickel2}, \cite{Bickel3},  \cite{Bickel4}, \cite{Knese1} and \cite{Knese2} and the references therein.
\section{Statement and proof of the main result}
In the paper \cite{Me} an initial investigation of the composition operators induced by Rational Inner Functions is made. There, it is investigated how the composition operator, acting on the Bergman space of the bidisc, behaves for specific symbols of the form $\Phi=(\phi,\phi).$ 
In the present note we provide a boundedness result for all self-maps of the bidisc induced by RIFs of more general form $\Phi=(\phi,\psi)$, where $\phi,\psi$ are not necessarily the same Rational Inner Function, but they have both one singularity on $\mathbb{T}^2.$  This boundedness result is for the composition operator $C_{\Phi}:A^2_{a}(\mathbb{D}^2)\to A_{\beta}^2(\mathbb{D}^2)$ where $a,\beta$ are positive exponents different from each other. In order to prove boundedness, one needs the following crucial lemma which is a Carleson measure criterium for the pull-back measure. 
\begin{lemma}
Let $\Phi:\mathbb{D}^2\to \mathbb{D}^2$ be a holomorphic self-map of the bidisc. The composition operator $C_{\Phi}: A^2_{a}(\mathbb{D}^2)\to A^2_{\beta}(\mathbb{D}^2)$ is bounded if and only if there is a constant $C>0$ such that for every $\tilde{\delta} \in (0,2)^2$ and $\zeta\in\mathbb{T}^2$:
$$V_{\beta}(\Phi^{-1}(S(\zeta,\tilde{\delta})))\leq CV_{a}(S(\zeta,\tilde{\delta})).$$
\end{lemma}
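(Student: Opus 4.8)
The plan is to recognize this as the Carleson-measure characterization of a Bergman-to-Bergman embedding, with the embedding governed by the pull-back measure, and to prove the two implications separately. First I would introduce the pull-back measure $\mu = V_{\beta}\circ\Phi^{-1}$ on $\mathbb{D}^2$, defined by $\mu(E)=V_{\beta}(\Phi^{-1}(E))$ for Borel sets $E\subset\mathbb{D}^2$. By the change-of-variables formula for push-forward measures, for every $f\in A^2_a(\mathbb{D}^2)$ one has
$$\|C_{\Phi}f\|_{A^2_{\beta}}^2=\int_{\mathbb{D}^2}|f(\Phi(z))|^2\,dV_{\beta}(z)=\int_{\mathbb{D}^2}|f(w)|^2\,d\mu(w).$$
Thus $C_{\Phi}:A^2_a\to A^2_{\beta}$ is bounded if and only if $A^2_a(\mathbb{D}^2)$ embeds continuously into $L^2(\mathbb{D}^2,\mu)$, i.e. $\mu$ is an $a$-Carleson measure for the Bergman space. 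The lemma then asserts that this embedding is equivalent to the box condition $\mu(S(\zeta,\tilde\delta))\lesssim V_a(S(\zeta,\tilde\delta))$.

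For \emph{necessity}, assume $C_{\Phi}$ is bounded. Fix $\zeta\in\mathbb{T}^2$ and $\tilde\delta$, and let $w=w(\zeta,\tilde\delta)\in\mathbb{D}^2$ be the center of the box, i.e. the point whose coordinates lie at distance comparable to $\delta_j$ from $\zeta_j$. I would test on the normalized reproducing kernels of $A^2_a$, which factor over the product,
$$k_w(z)\asymp\prod_{j=1,2}\frac{(1-|w_j|^2)^{(a+2)/2}}{(1-\bar w_j z_j)^{a+2}},$$
normalized so that $\|k_w\|_{A^2_a}\asymp 1$. A routine estimate gives $|k_w(z)|^2\gtrsim V_a(S(\zeta,\tilde\delta))^{-1}$ for every $z\in S(\zeta,\tilde\delta)$. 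Inserting $k_w$ into the embedding inequality and restricting the integral to the box yields
$$\frac{\mu(S(\zeta,\tilde\delta))}{V_a(S(\zeta,\tilde\delta))}\lesssim\int_{\mathbb{D}^2}|k_w|^2\,d\mu\le\|C_{\Phi}\|^2\|k_w\|_{A^2_a}^2\lesssim\|C_{\Phi}\|^2,$$
which is the desired box condition with $C\asymp\|C_{\Phi}\|^2$.

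For \emph{sufficiency}, assume the box condition. I would fix a Whitney-type lattice $\{R_k\}$ of $\mathbb{D}^2$ built from dyadic products of one-dimensional Carleson windows, so that each $R_k$ sits inside a comparable box $S_k=S(\zeta_k,\tilde\delta_k)$ and the enlarged boxes $S_k$ have bounded overlap. Using the sub-mean-value property of $|f|^2$ over polydiscs of comparable size, I would bound, for $z\in R_k$,
$$|f(z)|^2\lesssim\frac{1}{V_a(S_k)}\int_{S_k}|f|^2\,dV_a.$$
Summing against $\mu$ and applying $\mu(R_k)\le\mu(S_k)\lesssim V_a(S_k)$ gives
$$\int_{\mathbb{D}^2}|f|^2\,d\mu=\sum_k\int_{R_k}|f|^2\,d\mu\lesssim\sum_k\frac{\mu(S_k)}{V_a(S_k)}\int_{S_k}|f|^2\,dV_a\lesssim\sum_k\int_{S_k}|f|^2\,dV_a\lesssim\|f\|_{A^2_a}^2,$$
the last step using the bounded overlap of the $S_k$. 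This establishes the embedding, and hence the boundedness of $C_{\Phi}$.

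The main obstacle I anticipate is in the sufficiency direction, specifically in handling the two-parameter product geometry: on the bidisc one must verify that the rectangle condition over the product boxes $S(\zeta,\tilde\delta)$ is genuinely sufficient, in contrast to the Hardy-space situation where product Carleson measures require the stronger Chang--Fefferman condition over arbitrary open sets. What makes the Bergman setting tractable is the strong pointwise sub-mean-value control over polydiscs, which localizes the estimate box-by-box and lets the rectangle condition close the argument; carefully setting up the Whitney lattice with controlled overlap in both variables is where the care is needed.
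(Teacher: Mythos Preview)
The paper does not actually prove this lemma; it merely states it and attributes it to \cite{Koo} (Koo--Stessin--Zhu), noting that it was the main tool in \cite{Kosinski}, \cite{Bayart2}, \cite{Bayart3}. So there is no ``paper's own proof'' to compare against.

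Your sketch is the standard route to this Carleson-measure characterization and is essentially what one finds in the cited reference: rewrite $\|C_\Phi f\|_{A^2_\beta}^2$ as $\int |f|^2\,d\mu$ with $\mu=V_\beta\circ\Phi^{-1}$, test on normalized reproducing kernels for necessity, and run a Whitney/dyadic-box decomposition with the sub-mean-value inequality for sufficiency. Your remark about why the rectangle condition suffices here (in contrast to the Hardy/Chang--Fefferson product setting) is the right diagnosis. One small caveat worth making explicit: the reproducing-kernel test and the formula $k_w(z)\asymp\prod_j(1-|w_j|^2)^{(a+2)/2}(1-\bar w_j z_j)^{-(a+2)}$ require $a>-1$; the endpoint $a=-1$ is the Hardy case and genuinely needs a different treatment, so the lemma as used in the paper implicitly sits in the range $a>-1$ (which is consistent with the application in Theorem~\ref{bounded}, where $a=\beta/(2q)-2>-1$).
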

Recall that a two-dimensional Carleson box is defined as $$S(\zeta,\tilde{\delta})=\{(z_1,z_2)\in\mathbb{D}^2:|z_1-\zeta_1|<\delta_1,|z_2-\zeta_2|<\delta_2\},$$
where
$\zeta=(\zeta_1,\zeta_2)\in \mathbb{T}^2, \tilde{\delta}=(\delta_1,\delta_2)\in(0,2)^2.$
Moreover, its volume behaves like $V_{a}(S(\zeta,\delta))\asymp\delta_1^{a+2}\delta_2^{a+2}.$
The above lemma stated in \cite{Koo} was the main tool in the papers of  Kosi\'nski \cite{Kosinski}, and Bayart \cite{Bayart2} \cite{Bayart3}. We shall also need the following Lemmata.
\begin{lemma}
Let $p\in\mathbb{C}[z_1,z_2]$ be stable in $\mathbb{D}^2$ and vanish in at least one point $\tau\in\mathbb{T}^2$. Then, the intersection of the zero set of the polynomial $P_{\zeta}(z_1,z_2)=\widetilde{p}(z_1,z_2)-\zeta p(z_1,z_2)$ and $\mathbb{D}^2$ is empty $(\mathcal{Z}(P_{\zeta})\cap\mathbb{D}^2=\varnothing)$ and $\mathcal{Z}(P_\zeta)\cap\overline{\mathbb{D}^2}\subset\partial\mathbb{D}^2$ for all $\zeta\in\mathbb{T}.$
\label{zeroset}
\end{lemma}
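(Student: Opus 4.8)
The plan is to reduce both assertions to the single statement that $P_\zeta$ has no zeros in the \emph{open} bidisc. Indeed $\overline{\mathbb{D}^2}=\mathbb{D}^2\cup\partial\mathbb{D}^2$, so once $\mathcal{Z}(P_\zeta)\cap\mathbb{D}^2=\varnothing$ is known, every zero of $P_\zeta$ lying in $\overline{\mathbb{D}^2}$ must sit on $\partial\mathbb{D}^2$, giving the second claim for free. The whole argument then rests on a modulus comparison between $\widetilde{p}$ and $p$, exploiting that $\widetilde{p}/p$ is itself a rational inner function (the non-monomial part of Rudin's representation, see \cite{Rudin}).

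First I would record the boundary identity $|\widetilde{p}(z)|=|p(z)|$ for every $z\in\mathbb{T}^2$: this is immediate from $\widetilde{p}(z_1,z_2)=z_1^nz_2^m\overline{p(1/\bar z_1,1/\bar z_2)}$ together with $1/\bar z_i=z_i$ and $|z_1^nz_2^m|=1$ when $|z_1|=|z_2|=1$. Next, since $p$ is stable it does not vanish in $\mathbb{D}^2$, so $B:=\widetilde{p}/p$ is holomorphic there, and by the inner--function property underlying Rudin's theorem it satisfies $|B|\le 1$ on $\mathbb{D}^2$, i.e. $|\widetilde{p}(z)|\le|p(z)|$ for all $z\in\mathbb{D}^2$. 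I would then upgrade this to a \emph{strict} inequality: because the rational inner functions considered here are genuinely singular (in particular non-smooth, so $B$ is not a unimodular constant), the maximum modulus principle applied to the bounded holomorphic $B$ on the connected set $\mathbb{D}^2$ shows that $|B|$ cannot attain its supremum $1$ at an interior point, whence $|\widetilde{p}(z)|<|p(z)|$ for every $z\in\mathbb{D}^2$.

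With the strict inequality in hand, the conclusion is a one-line contradiction. Suppose $z=(z_1,z_2)\in\mathbb{D}^2$ satisfied $P_\zeta(z)=0$; then $\widetilde{p}(z)=\zeta p(z)$, and since $p(z)\neq0$ by stability we could divide to obtain $|B(z)|=|\zeta|=1$, contradicting $|B(z)|<1$. Hence $\mathcal{Z}(P_\zeta)\cap\mathbb{D}^2=\varnothing$, and as explained the remaining assertion $\mathcal{Z}(P_\zeta)\cap\overline{\mathbb{D}^2}\subset\partial\mathbb{D}^2$ follows at once, uniformly in $\zeta\in\mathbb{T}$.

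The main obstacle is precisely the passage from the non-strict to the strict modulus bound. The bound $|\widetilde{p}|\le|p|$ is the standard inner property and costs nothing, but strictness can fail exactly when $\widetilde{p}$ is a unimodular scalar multiple of $p$: for example $p=1-z_1z_2$ is stable, vanishes at $(1,1)\in\mathbb{T}^2$, yet gives $\widetilde{p}=-p$, so that $P_{-1}\equiv0$ and the statement collapses. This degenerate situation is exactly the case $B\equiv\mathrm{const}$, corresponding to $\Phi$ being a smooth monomial symbol, and is ruled out by the standing hypothesis that $\phi,\psi$ are non-smooth rational inner functions with a singularity on $\mathbb{T}^2$; I would make this exclusion explicit at the step where non-constancy of $B$ is invoked.
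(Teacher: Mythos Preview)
Your argument is correct and follows essentially the same route as the paper: write $P_\zeta = p(\phi-\zeta)$ with $\phi=\widetilde{p}/p$, use stability of $p$ together with the strict bound $|\phi|<1$ on $\mathbb{D}^2$ coming from the maximum modulus principle, and conclude that $P_\zeta$ cannot vanish in the open bidisc, after which the inclusion $\mathcal{Z}(P_\zeta)\cap\overline{\mathbb{D}^2}\subset\partial\mathbb{D}^2$ is immediate. Your explicit discussion of the degenerate case $\widetilde{p}=c\,p$ with $|c|=1$ (which the paper's proof silently bypasses when it asserts $|\phi|<1$) is a useful clarification; the paper additionally invokes \cite{Pascoe} to record that $\tau\in\mathcal{Z}(P_\zeta)$ and that $P_\zeta$ has no zeros in $\mathbb{E}^2$ either, but neither fact is required for the lemma as stated.
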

\begin{proof} Fix $\zeta\in\mathbb{T}$ for a moment and let $p\in\mathbb{C}[z_1,z_2]$ and consider the polynomial $P_{\zeta}(z_1,z_2)=\widetilde{p}(z_1,z_2)-\zeta p(z_1,z_2).$ Let $\phi=\frac{\widetilde{p}}{p}$ the RIF induced by $p.$ By the Maximum Modulus Principle, we know that $|\phi|<1$ in $\mathbb{D}^2$ and $|\phi|=1$ on $\mathbb{T}^2.$ Thus, $P_{\zeta}(z_1,z_2)=0$
if and only if $p(z_1,z_2)(\phi(z_1,z_2)-\zeta)=0$ and $p$ vanishes only on $\mathbb{T}^2.$ Now $p$ does not vanish on $\mathbb{D}^2$ and $|\phi|<1$ on $\mathbb{D}^2,$ hence the only way for $P_{\zeta}(z_1,z_2)=0$ is whenever $\phi=\zeta.$ This cannot happen on the open bidisc $\mathbb{D}^2.$ Moreover, $\widetilde{p},p$ vanish both at $\tau\in\mathbb{T}^2$, so $\{\tau\}\subset \mathcal{Z}(P_{\zeta})$ which is Lemma 2.8 in \cite{Pascoe}, a consequence of the "Edge of the Wedge" Theorem in particular. Moreover, in \cite{Pascoe}, it is shown that $P_{\zeta}$ has no zeros on $\mathbb{D}^2\cup \mathbb{E}^2,$ where $\mathbb{E}$ is the exterior unit disc $\mathbb{C}\setminus\overline{\mathbb{D}}.$ Hence, the only possibility is that $\mathcal{Z}(P_{\zeta})\subset(\mathbb{T}\times\mathbb{D})\cup(\mathbb{D}\times\mathbb{T})\cup\mathbb{T}^2.$  
\end{proof}
\begin{remark}
Note here that as long as $\mathcal{Z}(\widetilde{p}-\zeta p)\subset \partial \mathbb{D}^2,$ then $\mathrm{dist}(z,\mathcal{Z}(\widetilde{p}-\zeta p))\ge \min\{(1-|z_1|),(1-|z_2|)\}.$ Moreover, if $z=(z_1,z_2)$ lies close to the topological boundary $\partial\mathbb{D}^2,$ then $(1-|z_i|)\asymp(1-|z_i|^2),$ for $i=1,2.$ This simple observation will be useful later on.
\end{remark}
Moreover, we need the next lemma.
\begin{lemma} Let $(z_1,z_2)\in\mathbb{D}^2.$ Then, for all $\beta>0$ and $\delta\in(0,1)$
$$V_{\beta}(\{z\in\mathbb{D}^2:(1-|z_1|^2)(1-|z_2|^2)\leq \delta\})\simeq\delta^{\beta+1}\log\frac{1}{\delta}.$$
The symbol "$\simeq$" means that the quantities on the left- and right-hand sides are analogous to each other. The implied positive constant depends only on $\beta$.
\label{vollema}
\end{lemma}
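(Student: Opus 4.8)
The plan is to reduce the four-dimensional weighted volume to an elementary double integral in the variables $u_j=1-|z_j|^2$ and then evaluate it exactly. First I would pass to polar coordinates $z_j=r_je^{i\theta_j}$, so that $dV=r_1r_2\,dr_1\,dr_2\,d\theta_1\,d\theta_2$; since neither the weight $(1-|z_1|^2)^\beta(1-|z_2|^2)^\beta$ nor the defining inequality $(1-|z_1|^2)(1-|z_2|^2)\le\delta$ depends on the arguments, integrating out $\theta_1,\theta_2$ only contributes the constant factor $(2\pi)^2$. The substitution $u_j=1-r_j^2$, $r_j\,dr_j=-\tfrac12\,du_j$, then turns the weight into $u_1^\beta u_2^\beta$ and maps $r_j\in[0,1)$ to $u_j\in(0,1]$. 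Collecting constants, the claim reduces to estimating
$$I(\delta)=\int_0^1\!\!\int_0^1 u_1^\beta u_2^\beta\,\mathbf{1}_{\{u_1u_2\le\delta\}}\,du_1\,du_2,$$
since the full weighted volume equals $\pi^2 I(\delta)$.

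Next I would evaluate $I(\delta)$ by Fubini, splitting the outer variable at $u_1=\delta$. When $u_1\le\delta$ the constraint $u_2\le\delta/u_1$ is vacuous on $(0,1]$, so the inner integral is the constant $\int_0^1 u_2^\beta\,du_2=(\beta+1)^{-1}$; when $u_1>\delta$ the inner integral runs only up to $\delta/u_1<1$ and equals $\frac{1}{\beta+1}(\delta/u_1)^{\beta+1}$. Performing the two outer integrations yields the exact identity
$$I(\delta)=\frac{\delta^{\beta+1}}{(\beta+1)^2}+\frac{\delta^{\beta+1}}{\beta+1}\log\frac{1}{\delta},$$
where the logarithm arises precisely from $\int_\delta^1 u_1^{-1}\,du_1$ once the powers of $u_1$ cancel in the second piece.

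Finally, the asserted comparison follows by reading off the leading term: writing
$$I(\delta)=\frac{\delta^{\beta+1}}{\beta+1}\left(\log\frac{1}{\delta}+\frac{1}{\beta+1}\right)$$
shows that for $\delta$ bounded away from $1$ the parenthetical factor is comparable to $\log\frac{1}{\delta}$, with constants depending only on $\beta$, which gives $I(\delta)\simeq\delta^{\beta+1}\log\frac{1}{\delta}$ and hence the stated equivalence for the weighted volume.

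I do not anticipate a genuine obstacle here: the argument is a change of variables followed by a one-line integration. The only point requiring care is the interplay between the constant term and the logarithmic one. As $\delta\to1^-$ the factor $\log\frac{1}{\delta}$ degenerates while $I(\delta)$ stays bounded away from zero, so the equivalence "$\simeq$" is genuinely a statement about small $\delta$ — exactly the regime in which the lemma will be applied to the Carleson-box estimates — and I would state and use it with that understanding. One should also retain the normalizing constant $\pi^2$ so as to claim only a two-sided comparison rather than an equality.
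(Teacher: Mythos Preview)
Your argument is correct and follows exactly the paper's route: the paper also substitutes $u=1-|z_1|^2$, $v=1-|z_2|^2$ to reduce to $\iint_{\{uv\le\delta,\,0<u,v<1\}}u^\beta v^\beta\,du\,dv$ and then declares the remaining computation routine, which you have simply written out in full. Your caveat that the two-sided comparison can only hold for $\delta$ bounded away from $1$ is well taken and consistent with how the lemma is actually used.
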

\begin{proof} Set $u=1-|z_1|^2$ and $v=1-|z_2|^2$. It suffices to calculate the integral $\iint_{\{uv\leq \delta, 0<u,v<1\}}u^{\beta}v^{\beta}dudv.$ A routine calculation shows that this volume behaves like $\delta^{\beta+1}\log\frac{1}{\delta}.$
\end{proof}
Let us now  give the precise statement of the main result.
\begin{theorem} Let $\Phi=(\phi,\psi)$ with $\phi=\frac{\tilde{p_1}}{p_1}$ and $\psi=\frac{\tilde{p_2}}{p_2}.$ Assume that both polynomials $p_1,p_2$ have one zero on $\mathbb{T}^2$. Then there exists a $q>0,$ such that the composition operator $C_{\Phi}:A^2_{\frac{\beta}{2q}-2}(\mathbb{D}^2)\to A^2_{\beta}(\mathbb{D}^2)$ is bounded for all $\beta>2q.$
\label{bounded}
\begin{proof} We need to estimate the volume $V_{\beta}(\Phi^{-1}(S(\zeta,\tilde{\delta})))$ for all $\tilde\delta\in(0,2)^2$ and for all $(\zeta_1,\zeta_2)=\zeta \in \mathbb{T}^2.$ Similarly to \cite{Kosinski}, we can assume $\zeta_1=1.$ We establish first a local estimate for $|\phi(z)-1|. $ Take $\eta\in\mathbb{T}^2$ such that $\phi^{*}(\eta)=1,$ and $p_1(\eta)=0,$ where $\phi^{*}(\eta)$ denotes the non-tangential value of $\phi$ at the singularity $\eta.$ Consider a neighborhood $\mathcal{U}_1(\eta)$ of $\eta\in\mathbb{T}^2$ intersecting the interior of bidisc. By applying Łojasiewicz inequality on $\overline{\mathcal{U}_1(\eta)\cap\mathbb{D}^2}$ (see \cite{Loja} for a precise statement of the inequality) we find an exponent $q_1>0$ and a constant $C_1>0$ such that the inequality $|P_{\zeta}(z)|=|\widetilde p_1-1\cdot p_1|\ge C_1\mathrm{dist}^{q_1}(z,\mathcal{Z}(P_{\zeta})),$ holds for $z\in \overline{\mathcal{U}_1(\eta)\cap\mathbb{D}^2},$ and for $\phi^*(\eta)=\zeta_1=\zeta=1\in\mathbb{T}.$ Therefore, 
after applying Lemma \ref{zeroset}, one receives the chain
\begin{align} |\phi(z)-1|=&\left|\frac{\widetilde{p_1}(z)-p_1(z)}{p_1(z)}\right|&&\\
\nonumber
\ge&\frac{C_1\mathrm{dist}^{q_1}(z,\mathcal{Z}(\tilde{p_1}- p_1))}{M_1}&&\\
\nonumber \ge&\frac{C_1\mathrm{min}^{q_1}\{(1-|z_1|^2),(1-|z_2|^2)\}}{M_1}&&\\
\nonumber \ge&\frac{C_1(1-|z_1|^2)^{q_1}(1-|z_2|^2)^{q_1}}{M_1},
\end{align}
for all $z\in \overline{\mathcal{U}_1\cap\mathbb{D}^2}$, where $M_1=\max\{|p_1(z)|,z\in\overline{\mathcal{U}_1\cap\mathbb{D}^2}\}$ of $|p_1(z)|.$ Note that $\overline{\mathcal{U}_1\cap\mathbb{D}^2}$ is a small compact set around $\eta\in\mathbb{T}^2$ for which the \L{}ojasiewicz inequality holds.
By invoking Lemma \ref{vollema} we observe that
\begin{align} 
V_{\beta}(\Phi^{-1}(S(\zeta,\widetilde{\delta}))\cap \mathcal{U}_1)\leq& V_{\beta}(\{z\in\mathbb{D}^2\cap\mathcal{U}_1:|\phi(z)-1|<\delta_1\})&&\\
\nonumber\leq&\int_{\{z\in\mathbb{D}^2\cap\mathcal{U}_1:(1-|z_1|^2)(1-|z_2|^2)\leq(\frac{ M_1}{C_1}\delta_1)^{1/q_1}\}} dV_{\beta}(z_1,z_2)&&\\
\nonumber\lesssim& \delta_1^{\frac{\beta+1}{q_1}}\log\frac{1}{\delta_1}.
\end{align}
By rotating $\psi$ and composing it to a rotation to bring the singularity on the point $\eta\in\mathbb{T}^2,$ we repeat the same volume estimates for $\psi.$ One then, obtains a neighborhood $\mathcal{U}_2(\eta),$ an exponent $q_2>0$ and a positive constant $C_2>0$ such that
\begin{align} 
V_{\beta}(\Phi^{-1}(S(\zeta,\widetilde{\delta}))\cap\mathcal{U}_2)\leq& V_{\beta}(\{z\in\mathbb{D}^2\cap\mathcal{U}_2:|\psi(z)-1|<\delta_2\})&&\\
\nonumber\leq&\int_{\{z\in\mathbb{D}^2\cap\mathcal{U}_2:(1-|z_1|^2)(1-|z_2|^2)\leq (\frac{M_2}{C_2}\delta_2)^{1/q_2}\}} dV_{\beta}(z_1,z_2)&&\\
\nonumber\lesssim& \delta_2^{\frac{\beta+1}{q_2}}\log\frac{1}{\delta_2}.
\end{align}
Note that the volume measure is invariant under such rotations.
At this point one has to absorb the logarithmic factor in both estimates. To achieve this, we observe that there exists $\delta_0>0$ such that for all $\delta<\delta_0$ the inequality $\log\frac{1}{\delta}\leq \frac{1}{\delta^{\epsilon}}$ holds for all fixed $\epsilon>0.$ Consider now as $\mathcal{U}=\mathcal{U}_1\cap\mathcal{U}_2$ the intersection of the two neighborhoods found above. Arguing as in the proof of Theorem 10 in \cite{Kosinski}, it is enough to show $V_{\beta}(\Phi^{-1}(S(\zeta,\widetilde{\delta}))\cap \mathcal{U})\leq C\delta_1^{a+2}\delta_2^{a+2}$ for $a=\frac{\beta}{2q}-2$ and for some $C>0.$ Choosing $\epsilon=\frac{1}{q}>0,$  
multiplying the two inequalities by parts (after shrinking the neighborhoods if necessary to make the two inequalities hold in the same domain) and taking the square root, yields
$$V_{\beta}(\Phi^{-1}(S(\zeta,\widetilde{\delta}))\cap\mathcal{U})\leq C(q_1,q_2,\beta)\delta_1^{\frac{\beta+1}{2q_1}}\delta_2^{\frac{\beta+1}{2q_2}}\sqrt{\log\frac{1}{\delta_1}\log\frac{1}{\delta_2}}\leq C(q_1,q_2,\beta)\delta_1^{\frac{\beta}{2q}}\delta_2^{\frac{\beta}{2q}},$$ for all $\delta_1,\delta_2$ sufficiently small, where $q=\max\{q_1,q_2\}$ and
 $C(q_1,q_2,\beta),$ is a positive constant depending only on $q_1,q_2,\beta>0.$ The proof only works whenever $\frac{\beta}{2q}-2>-1.$ Thus, $\beta>2q.$ This finishes the proof. 
\end{proof}
\end{theorem}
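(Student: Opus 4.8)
The plan is to verify the Carleson-type condition of Lemma 1, namely to produce a constant $C>0$ with $V_{\beta}(\Phi^{-1}(S(\zeta,\tilde\delta)))\le C\,V_{a}(S(\zeta,\tilde\delta))\asymp C\,\delta_1^{a+2}\delta_2^{a+2}$ for every $\zeta\in\mathbb{T}^2$ and $\tilde\delta\in(0,2)^2$. After a rotation in each variable we may assume $\zeta=(1,1)$, so the task is to bound the volume of the set of $z$ for which $\phi(z)$ lies $\delta_1$-close to $1$ and $\psi(z)$ lies $\delta_2$-close to $1$. Since each RIF is continuous up to $\overline{\mathbb{D}^2}$ away from its single boundary singularity, the contribution of the region bounded away from the singularities is controlled exactly as in the smooth setting of Kosiński and Bayart; the genuinely new input is the local estimate in a neighborhood of the singularity, which is where the non-smoothness must be tamed.

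For the local estimate I would first bound $|\phi(z)-1|$ from below near the singularity $\eta$ (where $\phi^{*}(\eta)=1$ and $p_1(\eta)=0$). Writing $|\phi-1|=|\widetilde{p_1}-p_1|/|p_1|$ and noting that $|p_1|$ is bounded above on a compact neighborhood $\overline{\mathcal{U}_1\cap\mathbb{D}^2}$, the numerator $P_1=\widetilde{p_1}-p_1$ is a polynomial whose zero set, by Lemma \ref{zeroset}, meets $\overline{\mathbb{D}^2}$ only in $\partial\mathbb{D}^2$. The Łojasiewicz inequality then furnishes an exponent $q_1>0$ and a constant with $|P_1(z)|\gtrsim\mathrm{dist}^{q_1}(z,\mathcal{Z}(P_1))$ on this compact set, and the Remark converts the distance into $\min\{1-|z_1|^2,\,1-|z_2|^2\}$, hence into the product $(1-|z_1|^2)^{q_1}(1-|z_2|^2)^{q_1}$. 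Consequently $\{|\phi-1|<\delta_1\}$ is contained in a sublevel set $\{(1-|z_1|^2)(1-|z_2|^2)\le(c\,\delta_1)^{1/q_1}\}$, whose $V_{\beta}$-volume Lemma \ref{vollema} evaluates as $\delta_1^{(\beta+1)/q_1}\log(1/\delta_1)$. An identical argument, after rotating $\psi$ so that its singularity is placed at $\eta$ as well, yields the companion bound $\delta_2^{(\beta+1)/q_2}\log(1/\delta_2)$ with its own exponent $q_2$.

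Finally I would combine the two one-sided estimates on $\mathcal{U}=\mathcal{U}_1\cap\mathcal{U}_2$. Since $\Phi^{-1}(S(\zeta,\tilde\delta))$ is simultaneously contained in both sublevel sets, its volume is at most the geometric mean of the two bounds, giving $\delta_1^{(\beta+1)/(2q_1)}\delta_2^{(\beta+1)/(2q_2)}$ up to a factor $\sqrt{\log(1/\delta_1)\log(1/\delta_2)}$. Setting $q=\max\{q_1,q_2\}$ and absorbing the logarithm via $\log(1/\delta)\le\delta^{-\epsilon}$ with $\epsilon=1/q$ for small $\delta$, the exponent degrades from $(\beta+1)/(2q_i)$ to $\beta/(2q)$ in each variable. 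Matching this against the target $\delta_1^{a+2}\delta_2^{a+2}$ forces $a+2=\beta/(2q)$, i.e. $a=\frac{\beta}{2q}-2$, and the admissibility requirement $a>-1$ for the source space translates into the hypothesis $\beta>2q$.

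I expect the main obstacle to be the local lower bound: extracting a \emph{uniform} Łojasiewicz exponent valid on a full neighborhood of the singularity and, crucially, making the passage from $\mathrm{dist}(z,\mathcal{Z}(P_1))$ to the separated product $(1-|z_1|^2)^{q_1}(1-|z_2|^2)^{q_1}$ rigorous --- this is exactly where Lemma \ref{zeroset} (the zero set lies on $\partial\mathbb{D}^2$) and the Remark are indispensable. A secondary delicate point is the geometric-mean step, which is what allows the anisotropic, single-variable sublevel estimates to be fused into the symmetric Carleson-box scaling $\delta_1^{a+2}\delta_2^{a+2}$ without losing the exponent matching.
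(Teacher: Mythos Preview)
Your proposal is correct and follows essentially the same route as the paper's own proof: the Carleson criterion of Lemma~2.1, the local lower bound on $|\phi-1|$ via \L{}ojasiewicz applied to $\widetilde{p_1}-p_1$ combined with Lemma~\ref{zeroset} and the Remark, the volume estimate from Lemma~\ref{vollema}, the analogous bound for $\psi$ after rotation, the geometric-mean combination on $\mathcal{U}=\mathcal{U}_1\cap\mathcal{U}_2$, and the absorption of the logarithm with $\epsilon=1/q$ to reach the exponent $\beta/(2q)$. Even the identification of the delicate points (uniform \L{}ojasiewicz exponent and the geometric-mean fusion) matches the paper's emphasis.
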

\section{Comments}
Let us comment on the result above. As one can observe, the stated result is consistent with Example 5.1. in the paper \cite{Me}. In that specific example, we considered the symbol
 $$\Phi=(\varphi,\varphi)=\left(\frac{2z_1z_2-z_1-z_2}{2-z_1-z_2},\frac{2z_1z_2-z_1-z_2}{2-z_1-z_2}\right), (z_1,z_2)\in \mathbb{D}^2,$$ and proved that $C_{\Phi}:A^2_{\frac{\beta}{4}-2}(\mathbb{D}^2)\to A^2_{\beta}(\mathbb{D}^2)$ is bounded. Here, in the setting of Theorem \ref{bounded} the exponent $q$ satisfies $q=2.$ In the same spirit, we can also consider selfmaps of the form
 $$\Phi_{A,B}(z_1,z_2)=\left(\frac{2z_1z_2-z_1-z_2}{2-z_1-z_2},\frac{2z_1z_2-\overline{B}z_1-\overline{A}z_2}{2-Az_1-Bz_2}\right)=\left(\frac{\widetilde{p}_1}{p_1},\frac{\widetilde{p_{A,B}}}{p_{A,B}}\right),(z_1,z_2)\in\mathbb{D}^2$$
 for $A,B\in\mathbb{T}\setminus\{1\}$ satisfying $|A|+|B|=2.$ Both co-ordinate functions are induced by polynomials of bidegree $(1,1).$ After a rotational argument we can assume that the \L{}ojasiewicz exponent for the $|\widetilde{p_{A,B}}-p_{A,B}|$ will be equal to the one of $|\widetilde{p}-p|.$ This implies that $C_{\Phi_{A,B}}:A^2_{\frac{\beta}{4}-2}(\mathbb{D}^2)\to A^2_{\beta}(\mathbb{D}^2).$
 Nevertheless, one has to note that  
 the technique was different in Example 5.1. The main idea there was to consider the Sum of Squares formula for $p(z)=2-z_1-z_2$, with $(z_1,z_2)\in\mathbb{D}^2$ to obtain estimates on $|\varphi(z)-\zeta|.$ 
 Another fact that needs to be pointed out is that the volume estimates here are not sharp. Moreover, the exponents $\beta$ are positive. To this end, we propose the following interesting open problem.\\\\
\textbf{Problem 1:} \textit{Let $\Phi=(\phi,\psi)$ a holomorphic self map of the bidisc, where $\phi,\psi$ are RIFs with singularities. Characterize the RIFs that induce bounded composition operators acting on $A^2_{\beta}(\mathbb{D}^2),$ for all $\beta\ge-1.$}\\\\

The difficulty, of course, rises from the fact that RIFs with singularities might be non-tangentially smooth at best (see the work of Knese in \cite{Knese2}) and volume estimates in the spirit of Bayart and Kosi\'nski are not easily attainable. Nevertheless, giving such a characterization for RIFs might be the appropriate way to approach results more general than the results of \cite{Bayart2}, \cite{Kosinski} and \cite{Koo} for holomorphic self-maps of the bidisc which are not necessarily smooth.
\section{Acknowledgements} Part of the preparation of this work was conducted during the "1st Conference of Mathematics-Deskati 20th-23rd of August 2025". I would like to thank Athanasios Kouroupis, Dimitrios Vavitsas and municipality of Deskati for the excellent staying and working conditions. Moreover, I express my gratitude to prof. Alan Sola and prof. Kelly Bickel for their time discussing this note.

\bibliographystyle{amsplain}

\begin{thebibliography}{16}
\bibitem{Bayart2} F.Bayart, \textit{Composition operators on the polydisk induced by affine maps} Journal of Functional Analysis
Volume 260, Issue 7, 1 April 2011, Pages 1969-2003.
\bibitem{Bayart3} F.Bayart, \textit{Composition operators on the Hardy space of the tridisc}, to appear in Indiana University Mathematics Journal.
\bibitem{Me} A. Beslikas, \textit{Composition Operators and Rational Inner Functions on the bidisc,} Proceedings of the American Mathematical Society, Vol. 153, No. 8, August 2025.
\bibitem{Bickel1} K. Bickel, Joseph A. Cima, A. Sola, \textit{Clark Measures for Rational Inner Functions},
Michigan Math. J. 73(5): 1021-1057 (November 2023). 
DOI: 10.1307/mmj/20216046
\bibitem{Bickel2} K. Bickel, G. Knese, \textit{Inner functions on the bidisk and associated Hilbert spaces}, Journal of Functional Analysis
Volume 265, Issue 11
, 1 December 2013, Pages 2753-2790
\bibitem{Bickel3} K. Bickel, J.E. Pascoe, A. Sola, \textit{Singularities of Rational Functions in higher dimensions,} American Journal of Mathematics
Johns Hopkins University Press
Volume 144, Number 4, August 2022.
\bibitem{Bickel4} K. Bickel, J.E. Pascoe, A. Sola, \textit{Derivatives of Rational Inner Functions and integrability at the boundary,}
Proceedings of the London Mathematical Society, Vol. 116, Issue 2, pp.281-329
\bibitem{Knese1} G. Knese, \textit{Rational Inner Functions in the Schur-Agler class of the polydisc}, Publicacions Matemàtiques, Vol. 55, No. 2 (2011), pp. 343-357
\bibitem{Knese2} G. Knese, \textit{Integrability and regularity of Rational Inner Functions,} Proceedings of the London Mathematical Society, 2015, Vol 111, Issue 6, p1261
\bibitem{Kosinski} \L{}. Kosi\'nski, \textit{Composition operators on the polydisc}, Journal of Functional Analysis,
Volume 284, Issue 5, 1 March 2023, 109801.
\bibitem{Koo} H. Koo, M. Stessin, K. Zhu, \textit{Composition operators on the polydisc induced
by smooth symbols}, Journal of Functional Analysis 254 (2008) 2911–2925
\bibitem{Loja} S. Lojasiewicz, \textit{Introduction to
Complex Analytic Geometry },Transl. from the Polish by
Maciej Klimek.- Basel: Boston: Berlin: Birkh\"auscr, 1991
\bibitem{Pascoe} Bickel, K., Eldred Pascoe, J., and Sola, A. (2020). Level curve portraits of rational inner functions. ANNALI SCUOLA NORMALE SUPERIORE - CLASSE DI SCIENZE, 21(special, 1-2), 449-494. https://doi.org/10.2422/2036-2145.201804.025
\bibitem{Rudin} W. Rudin, \textit{Function Theory in polydisks}, W. A. Benjamin, Inc., New York Amsterdam, 1969.
\bibitem{Shapiro} J.H.Shapiro \textit{The essential norm of a composition operator,} Ann. Math. 12 (1987), 375-404.
\end{thebibliography}

Athanasios Beslikas,\\
Doctoral School of Exact and Natural Studies\\
Institute of Mathematics,\\
Faculty of Mathematics and Computer Science,\\
Jagiellonian University\\ 
\L{}ojasiewicza 6\\
PL30348, Cracow, Poland\\
athanasios.beslikas@doctoral.uj.edu.pl
\end{document}